\newlength{\bibitemsep}\setlength{\bibitemsep}{.2\baselineskip plus .05\baselineskip minus .03\baselineskip}
\newlength{\bibparskip}\setlength{\bibparskip}{0pt}
\let\oldthebibliography\thebibliography
\renewcommand\thebibliography[1]{%
	\oldthebibliography{#1}%
	\setlength{\parskip}{\bibitemsep}%
	\setlength{\itemsep}{\bibparskip}%
}
\newcommand{\id}{\operatorname{id}}
\newcommand{\cof}{\operatorname{cof}}
\newcommand{\loc}{\operatorname{loc}}
\newcommand{\R}{\mathbb R}
\DeclareMathOperator{\diam}{diam}
\DeclareMathOperator{\card}{card}
\DeclareMathOperator{\diver}{div}
\DeclareMathOperator{\I}{I}
\newcommand{\abs}[1]{\lvert#1\rvert}
\newcommand{\norm}[1]{\lVert#1\rVert}
\DeclareMathOperator{\esssup}{ess \, sup}
\newtheorem{theorem}{Theorem}[section]
\newtheorem{question}[theorem]{Question}
\newtheorem{conjecture}[theorem]{Conjecture}
\newtheorem*{theorem*}{Theorem}{\bf}{\it}
\newtheorem*{proposition*}{Proposition}{\bf}{\it}
\newtheorem{lemma}[theorem]{Lemma}
\newtheorem*{lemma*}{Lemma}{\bf}{\it}
\theoremstyle{definition}
\newtheorem*{definition*}{Definition}
\theoremstyle{remark}
\numberwithin{equation}{section}
\begin{document}

\title[A self-contained proof to Martio's conjecture]{A self-contained proof to Martio's conjecture in the class of BLD-maps}

\author{Ville Tengvall}
\address{Department of Mathematics and Statistics, University of Jyväskylä, P.O. Box 35 (MaD), FI-40014 University of Jyväskylä, Finland}
\email{ville.j.tengvall@jyu.fi}
\thanks{}

\subjclass[2010]{30C65, 30C62 (26B10, 57M12)}
\keywords{Branch set, quasiregular mappings, Reshetnyak's theorem, local homeomorphism, Martio's conjecture, BLD-mappings}
\date{\today}

\begin{abstract}
We provide a self-contained proof to so-called Martio's conjecture in the class of mappings of bounded length distortion. Unlike the earlier proofs,  our proof is not based on the modulus of continuity estimate of Martio from 1970.
\end{abstract}

\maketitle

\section{Introduction}\label{sec:Intro}

In this article we study the sufficient conditions for the local invertibility of mappings. Our work is motivated by the well-known \emph{inverse function theorem} which states that every continuously differentiable mapping
$$f \in C^{1}(U, \R^n) \quad \text{($U \subset \R^n$ open and connected 
	set)}$$
is a local $C^1$-diffeomorphism outside the zero set of its Jacobian determinant. We are interested in the conditions under which one can recover the local invertibility when the usual assumptions of the inverse function theorem are not satisfied. This leads us to the following two questions:
\begin{itemize}
	\item[(Q1)] How to recover local invertibility when the mappings are less than  $C^1$-regular?
	\item[(Q2)] How to recover local invertibility near the singular set of the Jacobian determinant of mappings?
\end{itemize}
We study these questions in the class of \emph{quasiregular mappings}, id est, in the class of Sobolev mappings
$$f \in W_{\loc}^{1,n}(U, \R^n) \quad \quad \text{($U \subset \R^n$ open and connected 
	set with $n \ge 2$)}$$
for which the \emph{operator norm} of the weak differential matrix satisfies the following \emph{distortion inequality}
\begin{align}\label{DistortionInequality}
\norm{Df(x)}^n \le  K \det Df(x) \colonequals K J(x; f) \colonequals KJ_f(x) \quad \text{a.e.}
\end{align}
for some constant $K \ge 1$. Homeomorphic quasiregular maps form the well-studied class of \emph{quasiconformal mappings}. For the basic properties and the background of quasiregular and quasiconformal mappings we refer to monographs \cite{Astala-Iwaniec-Martin,IwaniecMartin,Reshetnyak89,Rickman-book, VuorinenBook, VaisalaBook}. 

Next we point out that even if the definition of quasiregularity is purely analytical it still harbors a great deal of topological information per se. Especially, the distortion inequality can be applied to provide surprisingly vast amount of information on the invertibility properties of quasiregular mappings. This was first observed by Reshetnyak who originally introduced quasiregular mappings by the name of \emph{mappings of bounded distortion} and discovered their basic properties in a series of papers in 1966--1969. One of the deepest discoveries of these works was that non-constant quasiregular mappings are discrete and open, see e.g. \cite{Reshetnyak89}. This observation connected the study of quasiregular mappings to the earlier studies on \emph{branched coverings} in geometric topology, see e.g \cite{Chernavski1964, Chernavski1965, ChurchHemmingsen1960, ChurchHemmingsen1961, ChurchHemmingsen1963}.

In the critical step of the proof of Reshetnyak's celebrated discreteness and openness theorem one applies non-linear potential theory and  non-linear PDEs to transfer analytical data into topological information. This step can be carried out by studying the geometric size of the polar sets of the solutions to the quasilinear elliptic partial differential equation 
\begin{align}\label{eq:qrPDE}
-\diver \bigl( \langle G_f^{-1} \nabla u, \nabla u \rangle^{(n-2)/2} G_f^{-1} \nabla u \bigr) = 0 \, ,
\end{align}
where
\begin{displaymath}
G_f^{-1}(x) = \left\{ \begin{array}{ll}
\frac{\cof Df(x)^T \cof Df(x)}{\det Df(x)^{2(n-1)/n}}, & \textrm{if $J_f(x)>0$}\\
\I, & \textrm{otherwise,}
\end{array} \right.
\end{displaymath}
stands for the \emph{inverse dilatation tensor} and $\cof Df(x)$ denotes the \emph{cofactor matrix} of the differential matrix. This way one eventually obtains that
$$\mathcal{H}^1(f^{-1}(y)) = 0 \quad \text{for every } y \in \R^n \, ,$$
which implies that every quasiregular mapping is \emph{light} in the sense that the preimage of every point is totally disconnected. After this Reshetnyak's result follows by showing that quasiregular mappings are sense-preserving and by obtaining that sense-preserving and light maps between connected oriented manifolds are discrete and open, see e.g. \cite{BojarskiIwaniec1983, Heinonen2002,IwaniecMartin, ManfrediVillamor1998} for further details. 

Reshetnyak's theorem and its techniques have been studied further by several authors, see e.g. \cite{GoldsteinVodopyanov1976,HeinonenKoskela1993,HenclMaly2002,IwaniecSverak1993,ManfrediVillamor1998,ManfrediVillamor1995,OnninenZhong, Rajala2011}. In this process the developement of \emph{mappings of finite distortion} \cite{HenclKoskelaBook, IwaniecMartin, MRSYBook2009} and the study of their connection to the non-linear elasticity theory of Ball, Antman, and Ciarlet \cite{Ball2,Antman1976,Ci} have been main driving forces. In this context, the generalizations of Reshetnyak's theorem have been applied to investigate \emph{impenetrability of matter} of deformations in non-linear elasticity theory. However, usually these techniques have only been used to obtain discreteness and openness of deformations instead of recovering the actual local invertibility. 

In this article we study the local invertibility of quasiregular mappings by utilizing the earlier studies of Onninen and Zhong \cite{OnninenZhong} on Reshetnyak's theorem in order to study \emph{Martio's conjecture} which states that every non-constant quasiregular mapping
$$f : U \to f(U) \subset \R^n \quad (\text{$U \subset \R^n$ open set with $n \ge 3$})$$
with an \emph{inner dilatation}
$$K_I(f) \colonequals \esssup_{x \in U} \frac{\norm{\cof Df(x)}^n}{\det Df(x)^{n-1}}$$
less than two is a local homeomorphism. This long-standing unconfirmed conjecture was originally stated by Martio, Rickman, and Väisälä in \cite{MRV-71} and it was motivated by the preliminary work of Martio \cite{Martio1970}, where the conjecture was confirmed when the \emph{branch set}
$$\mathcal{B}_f \colonequals \{ x \in U : f \text{ is not a local homeomorphism at } x \}$$
of a quasiregular map contains a rectifiable curve. From a technical point of view it is usually more natural to study the following generalization of the conjecture from \cite{Tengvall}: 
\begin{conjecture}[Strong Martio's conjecture]
The inner dilatation of a non-constant quasiregular mapping
$$f : U \to f(U) \subset \R^n \quad (\text{$U \subset \R^n$ open set with $n \ge 3$})$$
satisfies
$$\inf_{x \in \mathcal{B}_f} i(x,f) \le K_I(f) \, ,$$ 
where and in what follows $i(x,f)$ 
stands for the local topological index of a point $x \in U$ under the mapping $f$, see \cite[Chapter~I]{Rickman-book}.
\end{conjecture}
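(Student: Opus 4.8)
The plan is to establish the sharp local bound $i(x_0,f)\le K_I(f)$ at a single, optimally chosen branch point by a modulus argument built on the branched-covering structure of quasiregular maps, avoiding Martio's modulus-of-continuity estimate. Assume $\mathcal B_f\neq\emptyset$ and write $i_0=\inf_{x\in\mathcal B_f}i(x,f)$, an integer with $i_0\ge 2$. Since $x\mapsto i(x,f)$ is integer-valued and upper semicontinuous, the set $\{x\in\mathcal B_f: i(x,f)=i_0\}$ is relatively open and nonempty, so the infimum is attained at some $x_0$ lying on the lowest stratum of the branch set; it then suffices to prove $i_0=i(x_0,f)\le K_I(f)$.

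First I would pass to a normal neighborhood $U=U(x_0,f,r)$, on which $f$ is a proper branched cover of degree $i_0$ onto $B=B(f(x_0),r)$ with $f^{-1}(f(x_0))\cap U=\{x_0\}$. For $0<\rho<r$ let $\Gamma_B$ be the family of paths joining the spheres $S(f(x_0),\rho)$ and $S(f(x_0),r)$ inside the spherical ring, so that $M(\Gamma_B)=\omega_{n-1}\bigl(\log(r/\rho)\bigr)^{1-n}$, where $\omega_{n-1}$ is a dimensional constant. A generic path of $\Gamma_B$ avoids $f(\mathcal B_f)$ and thus admits $i_0$ essentially disjoint liftings inside $U$; feeding this into the $K_I$-inequality of Väisälä yields the crucial gain
\[
M(\Gamma_U)\ \ge\ \frac{i_0}{K_I(f)}\,M(\Gamma_B),
\]
where $\Gamma_U$ is the total lift family joining $f^{-1}(S(f(x_0),\rho))\cap U$ to $\partial U$. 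To convert this into $i_0\le K_I(f)$ one needs the matching upper bound $M(\Gamma_U)\le(1+o(1))\,M(\Gamma_B)$ as $r\to 0$, which reduces to a purely geometric statement: that the normal neighborhoods are asymptotically round, equivalently that $f$ preserves radial scales, $|f(x)-f(x_0)|\asymp|x-x_0|$ with multiplicative constants tending to $1$. The role of the hypothesis $n\ge 3$ is transparent here. The extremal case is the winding map around an $(n-2)$-dimensional axis, which preserves $|x-x_0|$ exactly; its normal neighborhoods are genuine balls, $M(\Gamma_U)=M(\Gamma_B)$, and the displayed inequality collapses to the sharp equality $i_0=K_I(f)$. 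In the plane the corresponding model $z\mapsto z^{i_0}$ instead contracts radial scales by the exponent $1/i_0$, inflating $M(\Gamma_U)$ by exactly the factor $i_0$ and cancelling the gain, consistent with the conjecture being false for $n=2$.

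The main obstacle, and the reason the statement is posed as a conjecture rather than a theorem, is precisely the asymptotic roundness of the normal neighborhoods for a general quasiregular map at a minimal-index branch point. Controlling $M(\Gamma_U)$ by a round spherical ring $A(x_0,s,t)$ gives only $M(\Gamma_U)\lesssim\alpha^{\,n-1}M(\Gamma_B)$, where $\alpha\ge 1$ is the local modulus-of-continuity exponent of $f$ at $x_0$, hence merely $i_0\le K_I(f)\,\alpha^{\,n-1}$; the argument therefore closes only when $\alpha\to 1$. For a branch set that may be topologically wild and nowhere rectifiable there is at present no way to force this radial scale control: the winding of degree $i_0$ can in principle be smeared along the branch set so that $\alpha$ stays bounded away from $1$, and the lossy round-ring comparison cannot detect whether the anisotropy of $U$ helps or hurts. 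This is exactly the input supplied by hand in the two classically tractable regimes — the two-sided length bounds of a BLD map give $\alpha=1$ outright, while Martio's rectifiable-curve hypothesis makes the branch set locally tame enough to recover the same comparison. Proving the asymptotic roundness, or finding a sharp substitute for the round-ring bound that accommodates anisotropic normal neighborhoods, for all quasiregular maps is where I expect the genuine difficulty to lie.
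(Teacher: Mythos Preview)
Your proposal is not a proof of the conjecture, and you correctly recognize that it should not be: the statement is posed in the paper as an open conjecture, and the paper proves only the restricted BLD case (Theorem~\ref{thm:main}). What you have written is an accurate diagnosis via the modulus method together with an honest identification of the missing ingredient---the asymptotic roundness of normal neighborhoods, equivalently sharp two-sided radial control $|f(x)-f(x_0)|\asymp|x-x_0|$ with constants tending to~$1$. That is indeed where the problem is open.

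Where your argument does close, namely in the BLD case, it takes a genuinely different route from the paper. Your scheme is the V\"ais\"al\"a inequality $M(\Gamma_U)\ge (i_0/K_I)M(\Gamma_B)$ together with the ring comparison $B(x_0,s/L)\subset U(x_0,f,s)\subset B(x_0,Ls)$, which forces $M(\Gamma_U)\le\bigl(\log(r/\rho)/\log(r/(L^2\rho))\bigr)^{n-1}M(\Gamma_B)\to M(\Gamma_B)$ as $\rho\to 0$ and hence $i_0\le K_I$. The paper avoids conformal modulus and path families entirely: it sharpens an integral identity of Onninen and Zhong (Lemma~\ref{lemma:MainEstimate}), feeds in the borderline weight $\Psi(s)=s^{-n/2}\log^{1-n}(1/s)$, and derives a differential inequality for $\varphi(t)=\int_{B_t}J_f\,|f|^{-n}\log^{-n}(1/|f|^2)$ that yields a matching upper bound; the index then appears through the change-of-variables formula. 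The BLD hypothesis enters at the same point in both arguments---to secure $B(f(x_0),r/L)\subset f(B(x_0,r))$ (Lemma~\ref{lemma:LowerBound})---but your path to that point is modulus-theoretic while the paper's is purely integral. Your approach makes the extremal role of the winding map transparent and is closer in spirit to the earlier proofs the paper explicitly sets out to bypass; the paper's approach buys self-containment and a template that may extend under weaker integrability hypotheses on $1/J_f$, as the introduction hints.
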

We point out for the reader that the standard quasiregular \emph{$m$-to-1 winding mapping}
$$(r,\theta, z) \stackrel{w_m}{\mapsto} (r,m\theta,z) \quad \text{($z \in \R^{n-2}$)} \, ,$$
written here in the cylindrical coordinates, is an extermal for the conjecture. In addition, the holomorphic function
$$f : \mathbb{C} \to \mathbb{C}, \quad f(z) = z^m$$ 
shows the conjecture to fail in the planar case. In \cite{KLT3} Kauranen, Luisto, and Tengvall verified the conjecture for \emph{mappings of bounded length distortion}, also known as \emph{BLD-mappings}. This class consists of those quasiregular Lipschitz mappings
$$f : U \to f(U) \subset \R^n \quad \text{($U \subset \R^n$ open set with $n \ge 2$)}$$
for which we have
\begin{align}\label{eq:BoundedJacobian}
\det Df(x) > c \quad \text{a.e.}
\end{align}
for some constant $c>0$, see \cite{MartioVaisala}. In \cite{Tengvall} Tengvall relaxed the boundedness condition \eqref{eq:BoundedJacobian} even further by proving the conjecture under certain integrability condition on the reciprocal of the Jacobian determinant. He also offered several alternative proofs for the conjecture in the BLD-class. However, none of the above-mentioned proofs from \cite{KLT3,Tengvall} is self-contained as each one of them heavily relies on the following well-known local modulus of continuity estimate
\begin{align}\label{eq:ModuusOfContinuity}
\abs{f(x)-f(y)} \le C \abs{x-y}^{\bigl( \frac{i(x,f)}{K_I(f)}\bigr)^{\frac{1}{n-1}}} \quad \text{for all } y \in B(x,r)
\end{align} 
by Martio \cite{Martio1970} which can be also found from \cite[Theorem~III.4.7]{Rickman-book}. The proof of this estimate requires several layers of preliminary results which makes it technical and rather lengthy. In this article we prove the strong Martio's conjecture for BLD-mappings without any use of the estimate \eqref{eq:ModuusOfContinuity} by providing a rather short and self-contained proof for the following result from \cite{KLT3} which is valid also in the planar case:

\begin{theorem}[Kauranen, Luisto, and Tengvall, 2021]\label{thm:main}
	Every non-constant BLD-mapping
	$$f : U \to f(U) \subset \R^n \quad (\text{$U \subset \R^n$ open set with $n \ge 2$})$$
	satisfies $i(x,f) \le K_I(f)$ for every $x \in U$. 
\end{theorem}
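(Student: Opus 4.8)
The plan is to localise $f$ near a point and compare the $n$-capacity of a small spherical ring condenser downstairs with that of its preimage upstairs. Fix $x_0\in U$; we may assume $i(x_0,f)\ge 2$, so that $x_0\in\mathcal B_f$, since otherwise $i(x_0,f)=1\le K_I(f)$ trivially ($\norm{\cof A}^n\ge(\det A)^{n-1}$ whenever $\det A>0$). Normalise $x_0=0$, $f(0)=0$. For all small $r>0$ the set $V_r:=U(0,f,r)$, the $0$-component of $f^{-1}(B(0,r))$, is a normal neighbourhood: $\overline{V_r}$ is compact in $U$, $f\colon V_r\to B(0,r)$ is proper, $f(\partial V_r)\subseteq\partial B(0,r)$, and $f^{-1}(0)\cap\overline{V_r}=\{0\}$, so the topological degree $\mu(f,V_r)$ equals $i(0,f)=:i$. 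Using that $f$ is $L$-Lipschitz, that its inverse path-liftings increase length by at most the factor $L$ (the defining property of the BLD-class), and that $f$ maps $V_r$ onto $B(0,r)$ with the lifts of radial segments reaching every preimage, one obtains the two-sided inclusion $B(0,r/L)\subseteq V_r\subseteq B(0,Lr)$, uniformly for small $r$.

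Next I would identify the preimages of concentric balls. A connectedness argument shows that for $0<t<r$ every component of $f^{-1}(B(0,t))$ meeting $V_r$ is compactly contained in $V_r$ (its closure cannot reach $\partial V_r$, which maps into $\partial B(0,r)$) and, carrying positive local degree, must contain $0$; hence $f^{-1}(B(0,t))\cap V_r=V_t$ and $f^{-1}(\overline{B(0,t)})\cap V_r=\overline{V_t}$. Thus $(V_r,\overline{V_t})$ is exactly the $f$-preimage of the ring condenser $(B(0,r),\overline{B(0,t)})$, and $f$ restricts to a proper map between them of degree $i$.

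Now apply the sharp $K_I$-capacity inequality for proper quasiregular maps — this is where the analysis of Onninen and Zhong of the equation \eqref{eq:qrPDE} is meant to replace the modulus of continuity estimate \eqref{eq:ModuusOfContinuity} — which gives
\begin{align*}
\capa\bigl(B(0,r),\overline{B(0,t)}\bigr)\le\frac{K_I(f)}{i}\,\capa\bigl(V_r,\overline{V_t}\bigr).
\end{align*}
The left side is the explicit spherical-ring capacity $\omega_{n-1}\bigl(\log(r/t)\bigr)^{1-n}$ (with $\omega_{n-1}=\mathcal H^{n-1}(\bS^{n-1})$). For the right side, monotonicity of condenser capacity together with $B(0,r/L)\subseteq V_r$ and $\overline{V_t}\subseteq\overline{B(0,Lt)}\subseteq B(0,r/L)$ (valid once $t<r/L^2$) yields
\begin{align*}
\capa\bigl(V_r,\overline{V_t}\bigr)\le\capa\bigl(B(0,r/L),\overline{B(0,Lt)}\bigr)=\omega_{n-1}\bigl(\log(r/(L^2t))\bigr)^{1-n}.
\end{align*}
Combining the three displays, $\omega_{n-1}$ cancels and
\begin{align*}
i\le K_I(f)\left(\frac{\log(r/t)}{\log(r/t)-2\log L}\right)^{n-1},
\end{align*}
and letting $t\to 0^+$ with $r$ fixed forces $i\le K_I(f)$, which is the claim.

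I expect the real work to lie in two places. First, the local topology of BLD normal neighbourhoods: the two-sided metric control $B(0,r/L)\subseteq V_r\subseteq B(0,Lr)$ and the identity $f^{-1}(\overline{B(0,t)})\cap V_r=\overline{V_t}$ rest on the path-lifting properties of discrete open maps and on the length–distortion characterisation of BLD-maps, and must be set up carefully. Second, and more substantially, establishing the $K_I$-capacity (equivalently, modulus) inequality in a self-contained way: classically this and \eqref{eq:ModuusOfContinuity} are obtained through a chain of potential-theoretic lemmas, and the purpose here is to route around that via the Onninen–Zhong machinery for \eqref{eq:qrPDE}. One may also phrase the whole argument with the modulus of the curve family joining $\overline{V_t}$ to $\partial V_r$, lifting the radial curve family in $B(0,r)$ to $i$ essentially separate curves in $V_r$ and invoking Väisälä's inequality; the bookkeeping there — that the total lifts respect the local index and that maximal lifts terminate on $\partial V_r$ — is the analogue of the same obstacle.
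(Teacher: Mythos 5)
Your outline reduces the theorem to one central claim: the condenser inequality
\begin{align*}
\capa\bigl(B(0,r),\overline{B(0,t)}\bigr)\le\frac{K_I(f)}{i}\,\capa\bigl(V_r,\overline{V_t}\bigr),
\end{align*}
i.e.\ Martio's capacity inequality (equivalently the V\"ais\"al\"a/$K_I$-inequality with the multiplicity factor $i(x_0,f)$). Everything surrounding it --- the normal-neighbourhood set-up, the two-sided BLD inclusions $B(0,r/L)\subseteq V_r\subseteq B(0,Lr)$, the monotonicity of capacity, and the limit $t\to 0^+$ --- is sound and standard. But the capacity inequality itself is exactly the deep potential-theoretic input whose multi-layer proof underlies the modulus of continuity estimate \eqref{eq:ModuusOfContinuity}; indeed \eqref{eq:ModuusOfContinuity} is a few-line consequence of it. You neither prove it nor sketch how the Onninen--Zhong estimate for \eqref{eq:qrPDE} would yield it --- you explicitly defer this as ``the real work'' --- so as written the argument has a genuine gap at its only nontrivial analytic step. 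If instead you import the inequality as a known theorem (Rickman, Chapter~II), the proof closes, but then it is essentially the already-known argument of Kauranen--Luisto--Tengvall that the paper is deliberately avoiding, not a new or self-contained one.

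For comparison, the paper never proves (or uses) any capacity or modulus inequality. It proves, via Stokes' theorem and a cofactor identity, the pointwise-weighted integral estimate of Lemma~\ref{lemma:MainEstimate}; choosing the weight $\Psi(s)=s^{-n/2}\log^{1-n}(1/s)$ and a radial cut-off turns this, after the distortion inequality and H\"older on spheres, into a differential inequality for $\varphi(t)=\int_{B_t}J_f\,\abs{f}^{-n}\log^{-n}(1/\abs{f}^2)$, whose integration gives the decay $\varphi(r)\lesssim K_I\log^{1-n}(R/r)$. The topological index then enters only through the elementary lower bound $\varphi(r)\gtrsim i(0,f)\log^{1-n}(L/r)$, obtained from the change-of-variables formula, normal-neighbourhood multiplicity, and the single BLD inclusion $B(0,r/L)\subset f(B(0,r))$ of Lemma~\ref{lemma:LowerBound} (the only place BLD is used). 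Comparing the two bounds and letting $r\to0$ gives $i(0,f)\le K_I$. So the two routes share the BLD compression lemma and the final limiting step, but where you invoke the classical $K_I$-capacity machinery as a black box, the paper replaces it by a short Stokes-type estimate; supplying a genuinely self-contained derivation of the capacity inequality (or an equivalent substitute) is what your proposal is missing.
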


Finally we highlight the connection of \emph{the generalized Liouville's theorem} to our studies on local invertibility of quasiregular mappings. This rigidity result states that non-planar, non-constant quasiregular mappings with
\begin{align}\label{eq:DilatationOne}
K_I(f) = 1
\end{align}
are restrictions of Möbius transformations. The result generalizes the well-known Liouville's theorem \cite{Capelli1886, Liouville1850,Hartman1947,Hartman1958} on rigidity of non-planar conformal diffeomorphisms. The original proofs of Gehring \cite{Gehring1962} and Reshetnyak \cite{ReshetnyakLiouville1967}  for the result are based on the study of regularity properties of the solutions to the non-linear $n$-harmonic equation
\begin{align}\label{eq:nHarmonic}
-\diver \bigl( \abs{\nabla u(x)}^{n-2} \nabla u(x) \bigr) = 0 \, .
\end{align}
This equation can be obtained from \eqref{eq:qrPDE} when 
$$G_f^{-1}(x) = \id \quad \text{a.e.}$$
and in the planar case it reduces to the usual Laplace equation. In the earlier-mentioned work \cite{MRV-71} of Martio, Rickman, and Väisälä (see also \cite{Goldstein1971}) generalized Liouville's theorem was applied with a compactness argument to obtain that non-planar, non-constant quasiregular mappings with an inner dilatation close to one are local homeomorphisms. Later a quantitative version of this result was obtained by Rajala \cite{Rajala-MartioResult}.

As by the generalized Liouville's theorem all non-planar, non-constant quasiregular mappings with the property \eqref{eq:DilatationOne} coincide with the identity map up to a conjugation by restrictions of Möbius transformations it is natural to ask whether similar kind of phenomenom occurs also for quasiregular mappings with
\begin{align}\label{InnerAndIndex}
K_I(f) = \inf_{x \in \mathcal{B}_f} i(x,f) \, .
\end{align}
In the light of current knowledge it seems that up to a conjugation by Möbius transformations the only non-planar quasiregular mapping with the property \eqref{InnerAndIndex} is the standard $m$-to-1 winding map. Therefore, we conjecture:

\begin{conjecture}[Rigidity conjecture]
Every non-planar quasiregular mapping with
\begin{align*}
K_I(f) = m_f, \quad \text{where} \quad m_f \colonequals \left\{ \begin{array}{ll}
1, & \textrm{if $\mathcal{B}_f = \emptyset$}\\[0.5em]
\inf_{x \in \mathcal{B}_f} i(x,f), & \textrm{if $\mathcal{B}_f \neq \emptyset$,}
\end{array} \right.
\end{align*}
equals to the standard $m_f$-to-1 winding mapping up to a conjugation by restrictions of Möbius transformations.
\end{conjecture}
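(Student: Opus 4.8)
The plan is to split the statement into the unbranched and branched cases, reduce the branched case to a local ``unwinding'' into the generalized Liouville theorem, and then globalize by controlling the monodromy around the branch set. First I would dispose of the case $\mathcal{B}_f = \emptyset$: here $m_f = 1$, so $K_I(f) = 1$, and the generalized Liouville theorem forces $f$ to be a restriction of a Möbius transformation; since the $1$-to-$1$ winding map is the identity, this already places $f$ in the desired form. So assume $\mathcal{B}_f \neq \emptyset$ and write $m := m_f \ge 2$. The first genuine step is to promote the defining infimum to a pointwise equality: Theorem~\ref{thm:main} gives $i(x,f) \le K_I(f) = m$ for every $x$, while the definition of $m_f$ gives $i(x,f) \ge m$ for every $x \in \mathcal{B}_f$, whence $i(x,f) = m$ for every branch point. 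Thus the local index is \emph{constant} and equal to $m$ on $\mathcal{B}_f$.

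The heart of the argument is an \emph{infinitesimal rigidity} statement extracted from the equality case of Theorem~\ref{thm:main}. Writing the singular values of $Df$ as $\sigma_1 \ge \cdots \ge \sigma_n > 0$, one computes $K_I(f) = \prod_{i=1}^{n-1}(\sigma_i/\sigma_n)$, and a direct calculation shows that $w_m$ realizes the profile $(\sigma_1,\dots,\sigma_n) = c\,(m,1,\dots,1)$, stretching by the factor $m$ in a single angular direction and behaving conformally in the complementary $n-1$ directions. I would reexamine the proof of Theorem~\ref{thm:main}, where the topological index at a point is compared against the analytic distortion recorded by $G_f^{-1}$, and isolate its equality case. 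I expect the sharp form of that estimate to admit precisely the winding profile above as its unique equality case, so that $i(x_0,f) = K_I(f) = m$ forces $G_f^{-1}$ to agree almost everywhere near $x_0$ with the inverse dilatation tensor of $w_m$. In particular, the PDE \eqref{eq:qrPDE} governing $f$ then carries this fixed, $w_m$-invariant \emph{winding tensor} as its coefficient field, rather than the identity tensor that reduces \eqref{eq:qrPDE} to the $n$-harmonic equation \eqref{eq:nHarmonic}.

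With the coefficient field pinned down, the remaining task is a Liouville-type rigidity for this specific tensor, carried out by unwinding. Near a branch point the constant index $m$ together with the winding profile should yield a normal-form factorization $f = g \circ w_m$ on a suitable neighbourhood, with $g$ a local homeomorphism satisfying $K_I(g) = 1$; the generalized Liouville theorem then makes $g$ a Möbius map, so that $f$ is locally $\mu \circ w_m$ for some Möbius $\mu$. Because Möbius maps are conformal and $w_m$ has constant inner dilatation $m$, this local factorization automatically upgrades the almost-everywhere equality of the previous step to $K_I(f) \equiv m$ on that neighbourhood, and real-analyticity of the extremal solution should let one propagate the factorization along $U \setminus \mathcal{B}_f$.

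The main obstacle, and the place where the argument is not yet complete, is twofold. First, extracting the infinitesimal rigidity demands that the estimate underlying Theorem~\ref{thm:main} be \emph{sharp with a characterized equality case}: the self-contained proof here delivers the inequality, and promoting it to a rigidity statement that singles out the winding profile is the crucial analytic input still to be supplied. Second, even granting the local factorization $f = \mu \circ w_m$, assembling a \emph{single global} winding map requires controlling the monodromy of the branched cover $f$ around the codimension-two set $\mathcal{B}_f$; one must show that the local $\mathbb{Z}/m$ deck symmetry of $w_m$ glues consistently around every loop linking $\mathcal{B}_f$, i.e.\ that the branched cover determined by $f$ on $U \setminus \mathcal{B}_f$ is equivalent to that of $w_m$. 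This global topological matching, combined with the properness and degree theory of quasiregular mappings, is what would finally identify $f$ with $w_{m_f}$ up to composition with Möbius transformations.
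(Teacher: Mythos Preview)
The statement you are attempting to prove is not proved in the paper: it is stated there as an open \emph{conjecture}, so there is no ``paper's own proof'' to compare against. Your proposal is therefore a research plan for an open problem, and you yourself correctly flag its two main incomplete steps (the equality-case analysis and the global monodromy). That honesty is appropriate; what follows are the concrete gaps.

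The first genuine gap appears already in your reduction step. You invoke Theorem~\ref{thm:main} to obtain $i(x,f)\le K_I(f)=m$ for every $x$, but Theorem~\ref{thm:main} is proved only for \emph{BLD}-mappings, whereas the rigidity conjecture is stated for arbitrary non-planar quasiregular mappings. For a general quasiregular $f$ the inequality $i(x,f)\le K_I(f)$ is precisely the (unproved) Strong Martio's conjecture; the paper stresses that the only place the BLD hypothesis enters is Lemma~\ref{lemma:LowerBound}, and that this lemma fails for general quasiregular maps (the radial stretch is given as a counterexample). So your argument, as written, at best addresses a BLD version of the rigidity conjecture, not the conjecture itself.

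The second gap is the one you already name but perhaps understate. Your ``infinitesimal rigidity'' step asks for the equality case of the chain of inequalities in Section~5. Tracing that chain, equality in H\"older's inequality at \eqref{eq:BasicEstimation} forces $J_f/\bigl(|f|^n\log^n(1/|f|^2)\bigr)$ to be constant on almost every sphere $\partial B_t$, and equality in the inner-dilatation step forces $\lVert\cof Df\rVert^n = K_I J_f^{n-1}$ almost everywhere; neither of these by itself pins $G_f^{-1}$ to the winding tensor, and the passage from the differential inequality $\varphi(t)\le C_n t^{(n-1)/n}\varphi'(t)^{(n-1)/n}$ to the final bound discards a boundary term $\varphi(R)^{-1/(n-1)}$, so equality in the conclusion does not propagate back cleanly to equality at every stage. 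Extracting a pointwise normal form $f=g\circ w_m$ with $K_I(g)=1$ from this is substantially harder than the $m_f=1$ case, where Liouville applies directly; in particular, the local factorization through $w_m$ presupposes that $\mathcal{B}_f$ is locally a codimension-two plane, which is not known a priori and is itself part of what must be proved.
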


\section*{\textbf{Acknowledgments}} The author wishes to thank Jani Onninen for several discussions on his work \cite{OnninenZhong}. He would also like to thank Katrin Fässler and Sebastiano Nicolussi Golo for their comments on this research after author's Jyväskylä Geometric Analysis Seminar presentation in November 2021. Part of the writing process of the article was done while the author was visiting Massey University, New Zealand Institute for Advanced Studies. He would like to thank the institute for its hospitality. The visit was funded by the Mobility Grant 2022 of Faculty of Mathematics and Science of University of Jyväskylä.

\section{Preliminaries}

In this section we recall some basic facts on mappings of bounded length distortion from \cite{MartioVaisala} and on discrete and open maps from \cite{Rickman-book}. If the reader is well-aware of the basic properties and the notation related to these mapping classes, reading this section is not necessary.

\subsection{Preliminary properties for BLD-maps} We recall that mappings of bounded length distortion form a subclass of quasiregular maps. Thus, it follows from Reshetnyak's theorem that these mappings are continuous, sense-preserving, discrete, and open. In addition, by the characterization \cite[Theorem~2.16]{MartioVaisala} of these mappings every BLD-map
$$f : U \to f(U) \subset \R^n \quad (\text{$U \subset \R^n$ open set with $n \ge 2$})$$
satisfies the following length distortion bounds
\begin{align}\label{eq:LengthDistortion}
\ell(\gamma)/L \le \ell(f \circ \gamma) \le L \ell(\gamma) 
\end{align}
for every path $\gamma$ in $U$ with some length distortion constant $L \ge 1$, where $\ell(\gamma)$ stands for the lenght of a path $\gamma$.

\subsection{Preliminary properies for discrete and open maps} As mappings of bounded lenght distortion form a subclass of discrete and open maps we may apply all the basic results on these maps in order to study BLD-maps further. In this section we recall the definitions and results on discrete and open maps from \cite[Chapter~I]{Rickman-book} that are later needed for the proof of Theorem~\ref{thm:main}. We start by recalling that an open, connected set $D \subset \subset U$ is called a \emph{normal domain} of a continuous, discrete, and open mapping
$$f : U \to f(U) \subset \R^n \quad (\text{$U \subset \R^n$ open set with $n \ge 2$})$$
if it satisfies
\begin{align*}
f(\partial D) = \partial f(D).
\end{align*}
If a normal domain $D$ satisfies
$$D \cap f^{-1}(f(x)) = \{ x\} \, ,$$
then it is called a \emph{normal neighborhood} of a point $x \in U$. In addition, for a given point $x \in U$ we denote 
$$U(x,f,r) \colonequals \text{``the $x$-component of the preimage $f^{-1}\bigl(B(f(x),r) \bigr)$''}.$$ 
With the notation introduced above we may recall the following standard lemma from \cite[Lemma~I.4.9]{Rickman-book} which is applied frequently throughout the article: 

\begin{lemma}\label{lemma:NormalDomain}
Let
$$f : U \to f(U) \subset \R^n \quad (\text{$U \subset \R^n$ open set with $n \ge 2$})$$
be a continuous, discrete, and open mapping. Then for every $x \in U$ there exists a radius $r_x > 0$ for which $U(x,f,r)$ is a normal neighborhood of $x$ such that
$$f\bigl(U(x,f,r) \bigr) = B\bigl(f(x),r\bigr) \quad \text{for every } 0 < r \le r_x.$$
Moreover, we have
$$\diam \bigl(U(x,f,r) \bigr) \to 0 \quad \text{as } r \to 0 \, .$$
\end{lemma}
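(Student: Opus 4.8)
The plan is to build the normal neighborhood by hand from a ball in which $x$ is the only preimage of $f(x)$, and then extract the surjectivity onto $B(f(x),r)$ and the normal-domain property from continuity, discreteness, and openness in turn. First I would use discreteness of the fiber $f^{-1}(f(x))$ to fix a radius $\rho>0$ with $\overline{B}(x,\rho)\subset U$ and $f^{-1}(f(x))\cap\overline{B}(x,\rho)=\{x\}$. Since $\partial B(x,\rho)$ is compact and $f$ is continuous, the number $\sigma\colonequals\dist\bigl(f(x),f(\partial B(x,\rho))\bigr)$ is strictly positive, and I would set $r_x\colonequals\sigma$.

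The core geometric step is to show that for $0<r\le r_x$ the component $U(x,f,r)$ is trapped inside $B(x,\rho)$. Indeed, if $y\in U(x,f,r)\cap\partial B(x,\rho)$ then $\abs{f(y)-f(x)}<r\le\sigma$, contradicting the definition of $\sigma$; hence the connected set $U(x,f,r)$ contains $x\in B(x,\rho)$ yet avoids the separating sphere $\partial B(x,\rho)$, forcing $U(x,f,r)\subset B(x,\rho)$. In particular $\overline{U}(x,f,r)\subset\overline{B}(x,\rho)\subset U$ is compact, and the normal-neighborhood condition $U(x,f,r)\cap f^{-1}(f(x))=\{x\}$ is immediate from the choice of $\rho$.

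Next I would prove $f\bigl(U(x,f,r)\bigr)=B(f(x),r)$ by a connectedness argument: the image is open because $f$ is open, and it is relatively closed in $B(f(x),r)$ because any limit $y=\lim f(x_k)\in B(f(x),r)$ with $x_k\in U(x,f,r)$ has, by compactness of $\overline{U}(x,f,r)$, a subsequential limit $x_\infty$ with $f(x_\infty)=y\in B(f(x),r)$; then $x_\infty$ lies in the open set $f^{-1}(B(f(x),r))$ and is a limit of points of the component $U(x,f,r)$, so $x_\infty\in U(x,f,r)$. Since $B(f(x),r)$ is connected, the image is all of $B(f(x),r)$. The boundary identity $f(\partial U(x,f,r))=\partial B(f(x),r)=\partial f(U(x,f,r))$ then follows from two inclusions: a point $z\in\partial U(x,f,r)$ cannot lie in the open preimage $f^{-1}(B(f(x),r))$, so $\abs{f(z)-f(x)}\ge r$, while continuity gives $\abs{f(z)-f(x)}\le r$; conversely every $w\in\partial B(f(x),r)$ is a subsequential image limit of interior points whose preimage limit lands on $\partial U(x,f,r)$. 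This establishes that $U(x,f,r)$ is a normal neighborhood of $x$.

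Finally, for the diameter claim I would use monotonicity: since $B(f(x),r)\subset B(f(x),r')$ for $r<r'$, the connected set $U(x,f,r)$ sits inside $U(x,f,r')$, so the compact sets $\overline{U}(x,f,r)$ decrease as $r\to0$. Their intersection consists of points $z$ with $f(z)\in\overline{B}(f(x),r)$ for every $r$, hence $f(z)=f(x)$, so by the choice of $\rho$ the intersection is exactly $\{x\}$; a nested family of compact sets shrinking to a single point has diameter tending to zero, which gives $\diam U(x,f,r)\to0$. The main obstacle is the surjectivity step $f(U(x,f,r))=B(f(x),r)$: this is where openness of $f$ is genuinely needed, and the clopen argument must be run carefully using the compact containment $\overline{U}(x,f,r)\subset\subset U$ so that the limit point $x_\infty$ stays in the domain and in the correct component.
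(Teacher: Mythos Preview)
The paper does not actually prove this lemma: it is stated as a quotation of \cite[Lemma~I.4.9]{Rickman-book} and used without argument. Your proposal is a correct, self-contained proof and is essentially the standard one found in Rickman: isolate $x$ in its fiber by a ball $\overline B(x,\rho)$, set $r_x=\dist\bigl(f(x),f(\partial B(x,\rho))\bigr)$, trap $U(x,f,r)$ inside $B(x,\rho)$ by the connectedness/sphere argument, deduce surjectivity onto $B(f(x),r)$ via the clopen argument (this is exactly where openness enters), read off $f(\partial U(x,f,r))=\partial B(f(x),r)$, and conclude the diameter statement from the nested compact sets intersecting in $\{x\}$. All the steps you flag as delicate---in particular that the limit point $x_\infty$ must land back in the same component because components of an open set are relatively closed---are handled correctly.
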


\subsection{Path lifting} In order to study the compression of BLD-maps later in section~\ref{sec:Compression} we need the following path lifting lemma that follows directly from \cite[Proposition~I.4.10]{Rickman-book} and \cite[Corollary~II.3.4]{Rickman-book}:

\begin{lemma}\label{lemma:PathLifting}
Let
$$f : U \to f(U) \subset \R^n \quad (\text{$U \subset \R^n$ open set with $n \ge 2$})$$
be a continuous, discrete, and open mapping and let $D \subset \subset  U$ be a normal neighborhood of a point $x_0 \in U$ under the mapping $f$. In addition, denote
$$m \colonequals i(x_0,f) .$$	
Then for every path 
$$\beta :[a,b) \to f(D)$$ 
starting at $f(x_0)$ there exist paths
$$\alpha_j : [a,b) \to D \quad (\text{$j=1, \ldots, m$})$$
each of which starts at $x_0$ and such that the following conditions are satisfied: 
\begin{itemize}
	\item[(i)] $f \circ \alpha_j = \beta$ for each $j=1, \ldots, m$.
	\item[(ii)] $\card \{ j :\alpha_j(t) = x\} = i(x,f)$ for $x \in D \cap f^{-1}\bigl(\beta(t) \bigr)$. 
	\item[(iii)] For the traces of the above-mentioned paths we have the following relation
	$$\abs{\alpha_1} \cup \cdots \cup \abs{\alpha_m} = D \cap f^{-1}\bigl(\abs{\beta} \bigr).$$
\end{itemize}
\end{lemma}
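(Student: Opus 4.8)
\emph{Strategy.} Write $y_0 \colonequals f(x_0)$ and recall that, $D$ being a normal domain, the set $f(D)$ is open, $f(\partial D) = \partial f(D)$, and the local multiplicity
\[
\mu(y) \colonequals \sum_{x \in D \cap f^{-1}(y)} i(x,f)
\]
is constant on $f(D)$ (degree theory of discrete and open maps, \cite[Chapter~I]{Rickman-book}). Since $D$ is in fact a normal \emph{neighborhood} of $x_0$, the fibre over $y_0$ is $\{x_0\}$, so this constant equals $i(x_0,f)=m$. The whole lemma will follow once we produce $m$ paths $\alpha_1,\dots,\alpha_m : [a,b) \to D$, all starting at $x_0$, with $f \circ \alpha_j = \beta$ and obeying the \emph{multiplicity constraint}
\[
\card\{ j : \alpha_j(t)=x\} \le i(x,f) \qquad \text{for all } t\in[a,b),\ x \in D .
\]
Indeed, summing this inequality over $x \in D \cap f^{-1}(\beta(t))$ and using that all $m$ paths are defined at $t$ gives $m = \sum_x \card\{j:\alpha_j(t)=x\} \le \mu(\beta(t)) = m$, whence the constraint holds with \emph{equality} for every $t$ and every $x$; this is exactly (ii), and together with $f\circ\alpha_j=\beta$ it gives (i). For (iii), the inclusion $\abs{\alpha_1}\cup\cdots\cup\abs{\alpha_m} \subset D\cap f^{-1}(\abs{\beta})$ is immediate from (i), while equality in (ii) forces every $x \in D\cap f^{-1}(\beta(t))$, which has $i(x,f)\ge 1$, to lie on some $\alpha_j$, giving the reverse inclusion.

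\emph{Completeness of lifts.} I would first record that no lift can stop before time $b$. Fix a maximal $f$-lifting $\alpha:[a,c)\to D$ of $\beta$; such a maximal lift exists by the standard maximal path-lifting for discrete open maps. If $c<b$, maximality forces the cluster set of $\alpha$ as $t\to c^-$ to lie in $\partial D$. But for any cluster point $z\in\partial D$ we would obtain $f(z)=\lim_k \beta(t_k) = \beta(c) \in f(D)$, contradicting $f(z)\in f(\partial D)=\partial f(D)$, which is disjoint from the open set $f(D)$. Hence $c=b$ and every maximal lift is defined on all of $[a,b)$.

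\emph{Saturation.} It then remains to assemble $m$ complete lifts respecting the multiplicity constraint. I would proceed greedily: given complete lifts $\alpha_1,\dots,\alpha_k$ ($0\le k<m$) that jointly satisfy the constraint, I claim one can adjoin a further complete lift $\alpha_{k+1}$ starting at $x_0$ keeping the constraint valid. At the initial time the constraint at $x_0$ reads $k<m=i(x_0,f)$, so there is an unused sheet through $x_0$ from which to start. To continue $\alpha_{k+1}$ I would run the maximal-lift construction through the \emph{available} sheets: whenever $\alpha_{k+1}(t)=x$ one has $\card\{i\le k:\alpha_i(t)=x\}<i(x,f)$, leaving a sheet free. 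The subtle point is to continue consistently across a \emph{branch point} $x$, where several sheets coalesce; here I would pass to a small normal neighborhood $V$ of $x$ (Lemma~\ref{lemma:NormalDomain}), on which $f$ has local degree $i(x,f)$ and admits a local total lifting of $\beta$, and splice $\alpha_{k+1}$ into one of the $i(x,f)-\card\{i\le k:\alpha_i=x\}\ge 1$ locally free sheets. The completeness step then extends $\alpha_{k+1}$ to $[a,b)$. After $m$ such steps the counting identity above saturates the constraint, so no $(m+1)$-th lift can be added and the process delivers the desired family.

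\emph{Main obstacle.} The crux is the branch-point bookkeeping in the saturation step: guaranteeing that a partially built lift can always be threaded into a free sheet and continued continuously through branch points without exceeding the local index. This is precisely the content encapsulated by the maximal path-lifting proposition \cite[Proposition~I.4.10]{Rickman-book} and the total-lifting corollary \cite[Corollary~II.3.4]{Rickman-book} for normal domains, from which the stated lemma then follows directly.
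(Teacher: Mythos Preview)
Your proposal is correct and lands on exactly the same approach as the paper: the paper does not give an independent proof but simply records that the lemma ``follows directly from \cite[Proposition~I.4.10]{Rickman-book} and \cite[Corollary~II.3.4]{Rickman-book},'' which are precisely the two results you invoke in your final paragraph. Your exposition of constant multiplicity on a normal domain, completeness of maximal lifts via $f(\partial D)=\partial f(D)$, and the greedy saturation is a faithful sketch of how those references yield the statement, so there is nothing to add.
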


\section{Compression of BLD-mappings}\label{sec:Compression}

In this section we shortly discuss the local compression of BLD-maps and show that these maps cannot compress materia too much together in the following sense:
\begin{lemma}\label{lemma:LowerBound}
	Every non-constant BLD-mapping
	$$f : U \to f(U) \subset \R^n \quad (\text{$U \subset \R^n$ open set with $n \ge 2$})$$
	satisfies 
	$$B\bigl(f(x),r/L \bigr) \subset f\bigl(B(x,r) \bigr) \quad \text{for some $L \ge 1$} \, ,$$ 
	whenever $r>0$ is sufficiently small.
\end{lemma}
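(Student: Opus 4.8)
The plan is to combine the path-lifting property of Lemma~\ref{lemma:PathLifting} with the length distortion estimate \eqref{eq:LengthDistortion}. Given a point $y$ within distance $r/L$ of $f(x)$, I will lift the line segment from $f(x)$ to $y$ to a path starting at $x$; the lower bound in \eqref{eq:LengthDistortion} will force this lift to be so short that it cannot leave $B(x,r)$, so that its endpoint is a preimage of $y$ lying inside $B(x,r)$.

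First I would fix $x\in U$, take $L\ge1$ to be a length distortion constant of $f$ as in \eqref{eq:LengthDistortion}, and let $r_x>0$ be the radius from Lemma~\ref{lemma:NormalDomain}. I would then establish the asserted inclusion for every $r\in(0,r_x]$ with $B(x,r)\subset U$, which covers all sufficiently small $r>0$. Fixing such an $r$ and a point $y\in B(f(x),r/L)$ with $y\ne f(x)$ (the case $y=f(x)$ being immediate), Lemma~\ref{lemma:NormalDomain} makes $D\colonequals U(x,f,r)$ a normal neighborhood of $x$ with $f(D)=B(f(x),r)$; being a normal domain, $D$ also satisfies $\overline D\subset U$. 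The segment $\beta\colon[0,1)\to\R^n$, $\beta(t)=f(x)+t\,(y-f(x))$, starts at $f(x)$ and has trace contained in $B(f(x),r/L)\subset f(D)$, so Lemma~\ref{lemma:PathLifting} supplies a lift $\alpha\colon[0,1)\to D$ with $\alpha(0)=x$ and $f\circ\alpha=\beta$. Applying \eqref{eq:LengthDistortion} to each path $\alpha|_{[0,t]}$ in $U$ gives $\ell(\alpha|_{[0,t]})\le L\,\ell(\beta|_{[0,t]})=Lt\,\abs{y-f(x)}$, and letting $t\to1$ yields $\ell(\alpha)\le L\,\abs{y-f(x)}<r$.

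From here I would observe that, having finite length, $\alpha$ is uniformly continuous and hence extends to a path $\bar\alpha\colon[0,1]\to\overline D\subset U$ with $\bar\alpha(1)=\lim_{t\to1}\alpha(t)$; continuity of $f$ then gives $f(\bar\alpha(1))=\lim_{t\to1}\beta(t)=y$, while $\abs{\bar\alpha(1)-x}\le\ell(\alpha)<r$. Thus $\bar\alpha(1)\in B(x,r)\cap f^{-1}(y)$, so $y\in f(B(x,r))$, and since $y$ was an arbitrary point of $B(f(x),r/L)$ the lemma follows. I expect the one genuinely delicate point to be this last extraction of a preimage at the parameter value $1$ from the half-open lift $\alpha$: it is precisely where the finiteness of $\ell(\alpha)$ — itself a consequence of the lower bound in \eqref{eq:LengthDistortion} — is used to make $\alpha$ Cauchy as $t\to1$, and where $\overline D\subset U$ is needed so that the limit point lies in the domain of $f$. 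The remaining steps are routine bookkeeping with the normal neighborhood $U(x,f,r)$ and with the lengths of line segments.
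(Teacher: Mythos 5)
Your argument is correct and is essentially the paper's proof: both lift a radial line segment via Lemma~\ref{lemma:PathLifting} inside a normal neighborhood $U(x,f,\cdot)$ furnished by Lemma~\ref{lemma:NormalDomain}, and then apply the BLD length bound $\ell(\gamma)\le L\,\ell(f\circ\gamma)$ coming from \eqref{eq:LengthDistortion}. The only (harmless) difference is bookkeeping: the paper estimates from below the largest radius $\tilde r$ with $B\bigl(f(x),\tilde r\bigr)\subset f\bigl(B(x,r)\bigr)$ by lifting the segment $[f(x),f(z)]$ for a suitable $z\in\partial B(x,r)$, whereas you lift the segment to an arbitrary $y\in B\bigl(f(x),r/L\bigr)$ and verify that the finite-length lift terminates inside $B(x,r)$, which handles the endpoint of the half-open lift with a bit more care than the paper's sketch.
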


\begin{proof}
Fix a point $x \in U$. By Lemma~\ref{lemma:NormalDomain} we may find a radius $r_x > 0$ such that the set $U(x,f,r)$ is a normal neighborhood of $x$ whenever $0 < r \le r_x$. Fix any radius $r > 0$ so small that
$$B(x,r) \subset U(x,r_x,f) \, ,$$
and denote by $\tilde{r} > 0$ the largest radius such that
$$B\bigl(f(x),\tilde{r} \bigr) \subset f\bigl(B(x,r) \bigr).$$
Then to conclude the proof it suffices to show that
\begin{align}\label{ProveThis}
\tilde{r} \ge r/L \quad \text{for some constant $L \ge 1$.}
\end{align}
For this purpose, we obtain that by the openness of $f$ we have
$$\partial f\bigl(B(x,r) \bigr) \subset f\bigl(\partial B(x,r)\bigr),$$
and therefore we may find a point $z \in \partial B(x,r)$ such that
$$f(z) \in \partial B\bigl(f(x),\tilde{r} \bigr).$$
Let us next consider the line-segment
$$I \colonequals [f(x),f(z)].$$
By applying Lemma~\ref{lemma:PathLifting} we may then find a path $\gamma$ in $U(x,f, \tilde{r})$ from $x$ to $z$ such that
$$f \circ \gamma = I \, .$$
However, then by \eqref{eq:LengthDistortion} we get
\begin{align*}
\tilde{r} = \ell(I) = \ell(f \circ \gamma) \ge \ell(\gamma)/L \ge r/L 
\end{align*}
for some constant $L \ge 1$. Thus, we have verified \eqref{ProveThis} and the claim follows.

\end{proof}

We point out that Lemma~\ref{lemma:LowerBound} can be actually obtained directly from \cite[Lemma~4.6]{MartioVaisala}. However, for the convenience of the reader and for the self-containedness of this article we have sketched a proof above as well. One should also notice that Lemma~\ref{lemma:LowerBound} is not valid for quasiregular maps in general. This can be demonstrated by the radially symmetric $K$-quasiconformal map 
\begin{align*}
f : \R^n \to \R^n, \quad f(x) = \left\{ \begin{array}{ll}
\frac{x}{\abs{x}} \abs{x}^{K^{\frac{1}{n-1}}}, & \textrm{if $x \neq 0$}\\[0.5em]
0, & \textrm{if $x=0$,}
\end{array} \right. 
\end{align*}
with a constant $K > 1$. In this context we note for the reader that obtaining Lemma~\ref{lemma:LowerBound} is the only step of our proof for Theorem~\ref{thm:main} where the BLD-assumption is needed as all the other steps are valid for general quasiregular mappings. 

\section{Improving the lemma of Onninen and Zhong}

In this section we provide the key lemma for our proof of Theorem~\ref{thm:main}. For this purpose we first recall the result of Onninen and Zhong from \cite{OnninenZhong} according to which for every
$$f \in C^{\infty}(U, \R^n) \quad \text{and} \quad \Psi \in C^1\bigl([0,\infty), [0,\infty) \bigr) \, ,$$
and for every compactly supported test-function $\eta \in C_c^{\infty}\bigl(U,[0,\infty) \bigr)$ we have
\begin{align*}
\bigg{\lvert} \, \int_{U} \eta^n [n \Psi\bigl(\abs{f}^2 \bigr) + 2\abs{f}^2 \Psi'\bigl(\abs{f}^2 \bigr) J_f] \, \bigg{\rvert} \le C \int_{U} \eta^{n-1}  \abs{\nabla \eta} \abs{f} \Psi\bigl(\abs{f}^2 \bigr) \norm{Df}^{n-1} \, , 
\end{align*}
where the constant $C>0$ depends on the dimension of the underlying open set $U \subset \R^n$. In what follows, we provide a sharp version of this estimate in Lemma~\ref{lemma:MainEstimate} below. The proof of this refinement is based on the following identity:

\begin{lemma}\label{lemma:BasicLinearAlgebra}
Let $U \subset \R^n$ be an open set with $n \ge 2$ and suppose that 
$$f : U \to f(U) \subset \R^n \quad \text{and} \quad \eta : U \to \R \quad $$
are differentiable at a point $x \in U$. Then
\begin{align*}
\sum_{i=1}^n f_i &J(x; f_1, \ldots, f_{i-1}, \eta, f_{i+1}, \ldots, f_n) = \nabla \eta(x)^T \cof Df(x) f(x) \, . 
\end{align*}
\end{lemma}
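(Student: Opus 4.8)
The plan is to derive the identity directly from the multilinearity of the determinant together with Cramer's rule; the only real difficulty is notational, namely keeping the transpose conventions for $Df(x)$ and $\cof Df(x)$ straight so that the two contractions land on the intended indices.

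First I would fix $x \in U$ and, for a vector $w \in \R^n$, let $A_i(w)$ denote the matrix obtained from $Df(x)$ by substituting $w$ in place of the gradient $\nabla f_i(x)$ of the $i$-th component function. By definition,
\begin{equation*}
J\bigl(x; f_1, \ldots, f_{i-1}, \eta, f_{i+1}, \ldots, f_n\bigr) = \det A_i\bigl(\nabla \eta(x)\bigr) ,
\end{equation*}
and, since the determinant is linear in that slot, $w \mapsto \det A_i(w)$ is a linear functional on $\R^n$. Moreover the minors obtained by deleting the $i$-th slot do not involve $w$, so the cofactor expansion along that slot reads
\begin{equation*}
\det A_i(w) = \sum_{k=1}^n w_k\, \bigl(\adj Df(x)\bigr)_{ik} ,
\end{equation*}
that is, $\det A_i(w)$ is the $i$-th component of $\adj Df(x)\, w$; this is precisely Cramer's rule.

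Next I would multiply by $f_i(x)$, sum over $i$, and read the result as a matrix product:
\begin{align*}
\sum_{i=1}^n f_i(x)\, J\bigl(x; f_1, \ldots, \eta, \ldots, f_n\bigr) &= \sum_{i=1}^n f_i(x)\, \bigl(\adj Df(x)\, \nabla\eta(x)\bigr)_i \\
&= f(x)^T\, \adj Df(x)\, \nabla\eta(x) .
\end{align*}
Since the right-hand side is a scalar it equals its own transpose, and using $\bigl(\adj Df(x)\bigr)^T = \cof Df(x)$ I would conclude
\begin{equation*}
\sum_{i=1}^n f_i(x)\, J\bigl(x; f_1, \ldots, \eta, \ldots, f_n\bigr) = \nabla\eta(x)^T\, \cof Df(x)\, f(x) ,
\end{equation*}
which is the asserted identity.

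There is no analytic obstacle here: everything is finite-dimensional linear algebra, and the single point requiring care is the bookkeeping of rows versus columns and of $\cof$ versus $\adj$ for $Df(x)$, so that the two factors in the displayed product act on the indices the statement intends. As an alternative to invoking Cramer's rule, one could observe that both sides of the claimed identity are linear in $\nabla\eta(x)$ — the left-hand side because each Jacobian is linear in the replaced gradient and depends on $\eta$ only through $\nabla\eta$ — and then verify the equality on the basis vectors $\nabla\eta(x) = e_k$ by taking $\eta$ to be the coordinate functions $y \mapsto y_k$, in which case both sides reduce, by the very definition of the cofactor matrix, to $\sum_{i=1}^n f_i(x)\,\bigl(\cof Df(x)\bigr)_{ki}$.
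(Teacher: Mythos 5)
Your proposal is correct and follows essentially the same route as the paper: the paper's proof is exactly this computation, expanding $J(x;f_1,\ldots,\eta,\ldots,f_n)$ by cofactors along the replaced row (your Cramer's-rule/linearity step), multiplying by $f_i$, summing over $i$, and reading the double sum as the matrix product $\nabla\eta(x)^T\cof Df(x)f(x)$. The only point of care is the $\cof$ versus $\adj$ (row/column) bookkeeping, which you flag yourself; the paper's own proof is no more explicit about this convention, and the ambiguity is harmless for the application since only the transpose-invariant quantity $\norm{\cof Df}$ is used later.
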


\begin{proof}
Let us denote
$$A_{i,j} \colonequals [\cof Df(x)]_{i,j} \, .$$
Then by direct computation we obtain that
\begin{align}\label{eq:CofactorIdetity}
\nabla \eta(x)^T \cof Df(x) f(x) &= \left[ \begin{array}{ccc}
\partial_1 \eta & \cdots & \partial_n \eta 
\end{array} \right] \left[\ \begin{array}{ccc}
A_{1,1} & \cdots & A_{1,n} \\
\vdots  & \ddots & \vdots \\
A_{n,1} & \cdots & A_{n,n}
\end{array} \right] \left[ \begin{array}{ccc}
f_1 \\
\vdots \\
f_n 
\end{array} \right] \nonumber\\
&= \biggl(\sum_{i=1}^n f_1 \partial_i \eta A_{i,1} \biggr) + \cdots + \biggl(\sum_{i=1}^n f_n \partial_i \eta A_{i,n} \biggr) \\
&= \sum_{i=1}^n \sum_{j=1}^n \partial_i \eta A_{i,j} f_j \, . \nonumber
\end{align}
On the other hand, for each $i=1, \ldots, n$ we have
\begin{align*}
 f_i &J(x; f_1, \ldots, f_{i-1}, \eta, f_{i+1}, \ldots, f_n) = \sum_{j=1}^n  \partial_j \eta A_{1,j} f_i \, ,
\end{align*}
and by summing over $i$'s we get
\begin{align}\label{eq:JacobianIdentity}
\sum_{i=1}^n f_i &J(x; f_1, \ldots, f_{i-1}, \eta, f_{i+1}, \ldots, f_n) = \sum_{i=1}^n \sum_{j=1}^n  \partial_j \eta A_{i,j} f_i  \, .
\end{align}
By combining \eqref{eq:CofactorIdetity}--\eqref{eq:JacobianIdentity} we have
\begin{align*}
\sum_{i=1}^n f_i &J(x; f_1, \ldots, f_{i-1}, \eta, f_{i+1}, \ldots, f_n) = \nabla \eta(x)^T \cof Df(x) f(x) \, , 
\end{align*}
which completes the proof.
\end{proof}

\begin{lemma}\label{lemma:MainEstimate}
Let $U \subset \R^n$ be an open set with $n \ge 2$ and suppose that
$$f \in C^{\infty}(U, \R^n) \quad \text{and} \quad \Psi \in C^1\bigl([0,\infty), [0,\infty) \bigr) \, .$$
Then for every test-function $\eta \in C_c^{\infty}(U, [0, \infty))$ we have
\begin{align*}
\bigg{\lvert} \, \int_{U} \eta \Bigl[n \Psi\bigl(\abs{f}^2 \bigr) + 2\abs{f}^2 \Psi'\bigl(\abs{f}^2 \bigr) J_f \Bigr] \, \bigg{\rvert} \le \int_{U}  \abs{\nabla \eta} \abs{f} \Psi\bigl(\abs{f}^2 \bigr) \norm{\cof Df} \, .
\end{align*}
\end{lemma}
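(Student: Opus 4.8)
The plan is to establish, at every point of $U$, the pointwise identity
\begin{align*}
\eta\bigl[n\Psi(\abs{f}^2)+2\abs{f}^2\Psi'(\abs{f}^2)\bigr]J_f=\sum_{i=1}^n J\bigl(x;f_1,\ldots,f_{i-1},\eta\Psi(\abs{f}^2)f_i,f_{i+1},\ldots,f_n\bigr)-\Psi(\abs{f}^2)\,\nabla\eta^T\cof Df\,f,
\end{align*}
and then to deduce the estimate by integrating over $U$. Granting the identity, integration gives $\int_U\eta\bigl[n\Psi(\abs{f}^2)+2\abs{f}^2\Psi'(\abs{f}^2)\bigr]J_f=-\int_U\Psi(\abs{f}^2)\,\nabla\eta^T\cof Df\,f$ once we know (verified at the end) that each term $J(x;f_1,\ldots,\eta\Psi(\abs{f}^2)f_i,\ldots,f_n)$ integrates to zero; the elementary bound $\abs{\nabla\eta^T\cof Df\,f}\le\abs{\nabla\eta}\,\norm{\cof Df}\,\abs{f}$ (here $\norm{\cdot}$ is the operator norm) then yields the lemma.

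To prove the identity, put $\Phi\colonequals\eta\Psi(\abs{f}^2)$; since $f\in C^{\infty}$ and $\Psi\in C^1$ we have $\Phi\in C^1(U)$. Fix $x\in U$. For each $i$ the $i$-th row of the differential matrix of $(f_1,\ldots,f_{i-1},\Phi f_i,f_{i+1},\ldots,f_n)$ at $x$ equals $\Phi(x)\,\nabla f_i(x)^T+f_i(x)\,\nabla\Phi(x)^T$, so expanding the determinant by linearity in that one row gives
\begin{align*}
J\bigl(x;f_1,\ldots,\Phi f_i,\ldots,f_n\bigr)=\Phi\,J_f+f_i\,J\bigl(x;f_1,\ldots,f_{i-1},\Phi,f_{i+1},\ldots,f_n\bigr).
\end{align*}
Summing over $i$ and invoking Lemma~\ref{lemma:BasicLinearAlgebra} with $\Phi$ in place of $\eta$ yields
\begin{align*}
\sum_{i=1}^n J\bigl(x;f_1,\ldots,\Phi f_i,\ldots,f_n\bigr)=n\,\Phi\,J_f+\nabla\Phi^T\cof Df\,f.
\end{align*}
Now split $\nabla\Phi=\Psi(\abs{f}^2)\nabla\eta+\eta\,\Psi'(\abs{f}^2)\nabla(\abs{f}^2)$. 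Applying Lemma~\ref{lemma:BasicLinearAlgebra} once more, this time with $\abs{f}^2$ in place of $\eta$, and noting that by multilinearity $J\bigl(x;f_1,\ldots,f_{i-1},\abs{f}^2,f_{i+1},\ldots,f_n\bigr)=2f_iJ_f$ (each term with a repeated row vanishes), one gets $\nabla(\abs{f}^2)^T\cof Df\,f=\sum_{i=1}^n f_i\cdot 2f_iJ_f=2\abs{f}^2J_f$. Substituting $\Phi=\eta\Psi(\abs{f}^2)$ and collecting terms turns the previous display into the claimed identity.

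Finally, for the vanishing of the integrals: put $g\colonequals\eta\Psi(\abs{f}^2)f_i$, which lies in $C_c^1(U)$ because $\eta\in C_c^{\infty}(U)$ and $f\in C^{\infty}$, $\Psi\in C^1$. Expanding $J(x;f_1,\ldots,g,\ldots,f_n)$ along the row occupied by $g$ writes the integrand as $\nabla g$ dotted with a smooth, divergence-free vector field assembled from the $(n-1)\times(n-1)$ minors of $Df$ (Piola's identity, legitimate since $f$ is smooth); hence the integrand is $\diver$ of a compactly supported $C^1$ vector field, and $\int_U J(x;f_1,\ldots,g,\ldots,f_n)=0$. (Equivalently, the Jacobian determinant being a null Lagrangian, this integral equals the corresponding one for $(f_1,\ldots,f_{i-1},0,f_{i+1},\ldots,f_n)$, which vanishes because that map has a zero $i$-th row.) I expect the only real care to be needed in the bookkeeping of the multilinearity expansion and in keeping the sign/transpose convention for $\cof$ consistent with the one underlying Lemma~\ref{lemma:BasicLinearAlgebra}; everything else is routine.
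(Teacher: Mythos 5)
Your proof is correct and follows essentially the same route as the paper: the vanishing of $\int_U J(x;f_1,\ldots,\eta\Psi(\abs{f}^2)f_i,\ldots,f_n)$ (Stokes/null-Lagrangian), product and chain rule expansions, Lemma~\ref{lemma:BasicLinearAlgebra} to identify the cofactor term, and the operator-norm bound. The only difference is organizational — you assemble a single pointwise identity (applying Lemma~\ref{lemma:BasicLinearAlgebra} with $\Phi=\eta\Psi(\abs{f}^2)$ and again with $\abs{f}^2$) before integrating, while the paper integrates termwise and sums afterwards — which is the same argument in substance.
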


\begin{proof}
The proof follows the same steps as the proofs of \cite[Lemma~2.1]{OnninenZhong} and \cite[Lemma~3.19]{HenclKoskelaBook}. Indeed, first by fixing $i \in \{ 1, \ldots, n\}$ we obtain by applying Stokes' theorem that	
\begin{align}\label{eq:stokes}
\int_{U} J(x; f_1, \ldots, f_{i-1}, \eta \Psi\bigl(\abs{f}^2 \bigr) f_i, f_{i+1}, \ldots, f_n ) \, dx = 0 \, .
\end{align}
On the other hand, the chain rule gives us
\begin{align*}
J(x; f_1, \ldots, f_{i-1}, &\Psi\bigl(\abs{f}^2 \bigr), f_{i+1}, \ldots, f_n) \\
&= \sum_{j=1}^n 2\Psi'\bigl(\abs{f}^2 \bigr) f_j J(x; f_1, \ldots, f_{i-1},f_j,f_{i+1}, \ldots, f_n) \\
&= 2\Psi'\bigl(\abs{f}^2 \bigr) f_i J(x; f_1, \ldots, f_n) \, .
\end{align*}
Thus, by the product rule we obtain
\begin{align*}
&J(x; f_1, \ldots, f_{i-1}, \eta \Psi\bigl(\abs{f}^2 \bigr) f_i, f_{i+1}, \ldots, f_n ) \\
&= \Psi\bigl(\abs{f}^2 \bigr) f_i J(x; f_1, \ldots, f_{i+1},\eta, f_{i+1}, \ldots, f_n) + 2\eta f_i^2 \Psi'\bigl(\abs{f}^2 \bigr) J_f + \eta \Psi\bigl(\abs{f}^2 \bigr) J_f \, .
\end{align*}
By combining this with \eqref{eq:stokes} we get
\begin{align*}
\int_{U} \eta \bigl[ \Psi\bigl(\abs{f}^2 \bigr) + &2\Psi'\bigl(\abs{f}^2 \bigr) f_i^2\bigr] J_f \\
&= -\int_{U}  \Psi\bigl(\abs{f}^2 \bigr) f_i J(x; f_1, \ldots, f_{i-1}, \eta, f_{i+1}, \ldots, f_n) \, ,
\end{align*}
and by summing over $i$'s and applying Lemma~\ref{lemma:BasicLinearAlgebra} we have
\begin{align*}
\biggl{\lvert} \int_{U} \eta \Bigl( n&\Psi\bigl(\abs{f}^2 \bigr) + 2\Psi'\bigl(\abs{f}^2 \bigr) \abs{f}^2\Bigr) J_f \biggr{\rvert}\\
&= \biggl{\lvert} \sum_{i=1}^n \int_{U} \Psi\bigl(\abs{f}^2 \bigr) f_i J(x; f_1, \ldots, f_{i-1}, \eta, f_{i+1}, \ldots, f_n) \biggr{\rvert} \\
&= \biggl{\lvert} \int_{U}  \Psi\bigl(\abs{f(x)}^2 \bigr) \nabla  \eta(x)^T \cof Df(x) f(x) \biggr{\rvert} \\
&= \biggl{\lvert} \int_{U \setminus \{ x : \abs{f(x)} = 0\}}  \Psi\bigl(\abs{f(x)}^2 \bigr) \nabla  \eta(x)^T \cof Df(x) f(x) \biggr{\rvert} \\
&\le \int_{U \setminus \{ x : \abs{f(x)} = 0\}} \Psi\bigl(\abs{f(x)}^2 \bigr) \abs{\nabla \eta(x)} \, \abs{f(x)} \,  \bigg{\lvert} \cof Df(x) \frac{f(x)}{\abs{f(x)}} \bigg{\rvert} \, dx \\
&\le \int_{U}  \Psi\bigl(\abs{f(x)}^2 \bigr) \abs{\nabla \eta(x)} \, \abs{f(x)} \,   \norm{\cof Df(x)}\, dx \, ,
\end{align*}
which ends the proof.
\end{proof}

\section{Proof of Theorem~\ref{thm:main}}

To prove Theorem~\ref{thm:main} we first assume without loss of generality that
$$f(0) = 0 \quad \text{ and } \quad i(0,f) = m.$$ 
By applying Lemma~\ref{lemma:NormalDomain} we find a radius $r_0>0$ such that $U(0,f,t)$ defines a normal neighborhood of the origin whenever $0 < t \le r_0$. Next, we fix a radius  
$$0 < R  < \min \{ 1, r_0\} \quad \text{so small that} \quad f\bigl(B(0,R) \bigr) \subset B(0,r_0) ,$$
and assume first that 
$$f \in C^{\infty}(U, \R^n).$$ 
Suppose now that $t \in (0,R)$ and let $\varepsilon >0$. By applying Lemma~\ref{lemma:MainEstimate} with the standard cut-off function 
\begin{align*}
\eta_{\varepsilon}(s) = \left\{ \begin{array}{ll}
1, & \textrm{if $0 \le s \le t-\varepsilon$}\\
\frac{t-s}{\varepsilon} \, ,& \textrm{if $t-\varepsilon < s < t$}\\
0, & \textrm{if $s \ge t$,}
\end{array} \right.
\end{align*}
we get for any function $\Psi \in C^1\bigl([0,\infty),[0,\infty) \bigr)$ the following estimate
\begin{align}\label{eq:IntegralDifferation}
\biggl{\lvert}\int_{B_t} \eta_{\varepsilon} \bigl[ n \Psi(\abs{f}^2) +2 \abs{f}^2 \Psi'(\abs{f}^2)\bigr] J_f \bigg{\rvert} &\le \int_{B_t} \abs{\nabla \eta_{\varepsilon}} \abs{f} \Psi(\abs{f}^2) \norm{\cof Df} \nonumber \\
&= \frac{1}{\varepsilon} \int_{B_t \setminus B_{t-\varepsilon}} \abs{f} \Psi(\abs{f}^2) \norm{\cof Df} \, ,
\end{align}
where and in what follows we denote
$$B_t \colonequals B(0,t) \quad \text{for a given radius } t > 0 \, .$$
By letting $\varepsilon \to 0$ we conclude from \eqref{eq:IntegralDifferation} that
\begin{align}\label{eq:BoundaryEstimate}
\biggl{\lvert}\int_{B_t} \bigl[ n \Psi(\abs{f}^2) +2 \abs{f}^2 \Psi'(\abs{f}^2)\bigr] J_f \bigg{\rvert} \le \int_{\partial B_t} \abs{f} \Psi(\abs{f}^2) \norm{\cof Df} \, d \mathcal{H}^{n-1},
\end{align}
see \cite[3.4.4]{EvansGariepyBook}. Next we observe that for the function
\begin{align*}
\Psi(s) = \frac{s^{-\frac{n}{2}}}{\log^{n-1}(1/s)}
\end{align*}
we have
\begin{align*}
\Psi'(s) = -\frac{n}{2\log^{n-1}(1/s) s^{\frac{n}{2} +1}} + \frac{n-1}{\log^n(1/s) s^{\frac{n}{2} +1}} \, .
\end{align*}
By approximating $\Psi$ with $C^1$-functions we conclude from \eqref{eq:BoundaryEstimate} the following estimate
\begin{align}\label{eq:BestEstimate}
2(n-1) \int_{B_t} \frac{J_f}{\abs{f}^n \log^n(1/\abs{f}^2)} &\le  \int_{\partial B_t} \frac{\norm{\cof Df}}{\abs{f}^{n-1} \log^{n-1}(1/\abs{f}^2)} \, d \mathcal{H}^{n-1} \, .
\end{align}
Now, the standard approximation argument shows that for almost every radius $t \in (0,R)$ the estimate  \eqref{eq:BestEstimate} is valid also for continuous Sobolev mappings 
$$f \in W_{\loc}^{1,n}(U, \R^n) \cap C(U,\R^n) \, .$$
Therefore, under the assumptions of the theorem we obtain by applying the estimate \eqref{eq:BestEstimate} together with the inner dilatation inequality
$$\norm{\cof Df(x)}^n \le K_I(f) J_f(x)^{n-1} \colonequals K_I J_f(x)^{n-1} \quad \text{a.e.} \, ,$$
and Hölder's inequality that for almost every $t \in (0,R)$ one has
\begin{align}\label{eq:BasicEstimation}
2(n-1) \int_{B_t} \frac{J_f}{\abs{f}^n \log^n(1/\abs{f}^2)} &\le  \int_{\partial B_t} \frac{\norm{\cof Df}}{\abs{f}^{n-1} \log^{n-1}(1/\abs{f}^2)}  \nonumber \\ 
&\le K_I^{\frac{1}{n}} \int_{\partial B_t} \frac{J_f^{\frac{n-1}{n}}}{\abs{f}^{n-1}\log^{n-1}(1/\abs{f}^2)} \nonumber \\
&\le K_I^{\frac{1}{n}} \abs{\partial B_t}^{\frac{1}{n}} \biggl( \int_{\partial B_t} \frac{J_f}{\abs{f}^n \log^{n}(1/\abs{f}^2)} \biggr)^{\frac{n-1}{n}} \\
&= K_I^{\frac{1}{n}}  \omega_{n-1}^{\frac{1}{n}} t^{\frac{n-1}{n}} \biggl( \int_{\partial B_t} \frac{J_f}{\abs{f}^n \log^{n}(1/\abs{f}^2)} \biggr)^{\frac{n-1}{n}} \, , \nonumber
\end{align}
where $\omega_{n-1}$ denotes the surface measure of the unit sphere and $\abs{\partial B_t}$ denotes the surface measure of the sphere $\partial B_t$. By considering the increasing, non-negative function
\begin{align*}
\varphi(t) = \int_{B_t} \frac{J_f}{\abs{f}^n \log^n(1/\abs{f}^2)}
\end{align*}
we may rewrite the estimate \eqref{eq:BasicEstimation} as follows
\begin{align*}
\varphi(t) \le C_n t^{\frac{n-1}{n}} \bigl[ \varphi'(t) \bigr]^{\frac{n-1}{n}} \, , \quad \text{where} \quad C_n \colonequals \frac{K_I^{\frac{1}{n}}\omega_{n-1}^{\frac{1}{n}}}{2(n-1)} \, .
\end{align*}
Therefore, we have
\begin{align*}
\frac{1}{C_n^{\frac{n}{n-1}} t} \le \frac{\varphi'(t)}{\varphi(t)^{\frac{n}{n-1}}} = -(n-1) \frac{d}{dt} \, \biggl[ \varphi(t)^{-\frac{1}{n-1}} \biggr] \, ,
\end{align*}
and by integrating both sides over an interval $(r,R)$ we get
\begin{align*}
\frac{1}{C_n^{\frac{n}{n-1}}} \log(R/r) &\le -(n-1) \biggl[ \varphi(R)^{-\frac{1}{n-1}} - \varphi(r)^{-\frac{1}{n-1}}  \biggr] \le (n-1) \varphi(r)^{-\frac{1}{n-1}} \, .
\end{align*}
This gives us
\begin{align}\label{eq:UpperMainEstimate}
\varphi(r) \le \frac{C_n^n (n-1)^{n-1}}{\log^{n-1}(R/r)} \, .
\end{align}
Next, we recall that by Lemma~\ref{lemma:LowerBound} we have
\begin{align}\label{eq:ImportantInclusion}
B_{r/L} \subset f(B_r) \quad \text{for all sufficiently small radii $r>0$}.
\end{align}
In addition, for a given set $A \subset \R^n$ we recall the standard notation
$$N(y,f,A) \colonequals \card f^{-1}(y) \cap A \, .$$
Then by applying the change of variables formula \cite[Theorem~A.35]{HenclKoskelaBook} and the inclusion \eqref{eq:ImportantInclusion} we obtain the following lower estimate
\begin{align}\label{eq:LowerMainEstimate1}
\varphi(r) = \int_{B_r} \frac{J_f}{\abs{f}^n \log^n(1/\abs{f}^2)} &= \int_{f(B_r)} \frac{N(y,f,B_r)}{\abs{y}^n \log^{n}(1/\abs{y}^2)} \, dy \nonumber \\
&\ge \int_{B_{r/L}} \frac{N(y,f,U_{r/L})}{\abs{y}^n \log^{n}(1/\abs{y}^2)} \, dy \\
&= i(0,f) \int_{B_{r/L}} \frac{dy}{\abs{y}^n \log^{n}(1/\abs{y}^2)} \nonumber
\end{align}
for all sufficiently small $r>0$, where the last equality follows from the fact that 
$$U_{r/L} \colonequals U(0,f,r/L)$$ 
is a normal neighborhood of the origin and from \cite[Proposition~I.4.10]{Rickman-book}. By a direct calculation we get
\begin{align}\label{eq:LowerMainEstimate2}
\int_{B_{r/L}} \frac{dz}{\abs{y}^n\log^{n}(1/\abs{y}^2)} &= \int_0^{r/L} \int_{\partial B_t} \frac{1}{t^n \log^n(1/t^2)} \nonumber \\
&= \frac{\omega_{n-1}}{2^n} \int_0^{r/L} \frac{dt}{t\log^n(1/t)} \\
&= -\frac{\omega_{n-1}}{2^n} \int_{\infty}^{\log(L/r)} \frac{ds}{s^n} \nonumber \\
&= \frac{\omega_{n-1}}{(n-1)2^n \log^{n-1}\bigl(\frac{L}{r} \bigr)}  \nonumber
\end{align}
for all sufficiently small radii $r>0$. Thus, by combining the estimates \eqref{eq:UpperMainEstimate} and \eqref{eq:LowerMainEstimate1}--\eqref{eq:LowerMainEstimate2} we get
\begin{align*}
\frac{i(0,f)\omega_{n-1}}{(n-1)2^n \log^{n-1}\bigl(\frac{L}{r} \bigr)} \le \varphi(r) &\le \frac{C_n^n(n-1)^{n-1}}{\log^{n-1}(R/r)} = \frac{K_I \omega_{n-1}(n-1)^{n-1}}{2^n (n-1)^n\log^{n-1}(R/r)} \, .
\end{align*}
By simplifying both sides we finally obtain
\begin{align*}
i(0,f) \le K_I \biggl( \frac{\log(L/r)}{\log(R/r)} \biggr)^{n-1} \quad \text{for all small } r>0 \, .
\end{align*}
Thus, by letting $r \to 0$ we observe that $i(0,f) \le K_I$ which concludes the proof.

\section{Final remarks}

We close the article with further observations on rigidity and Martio's conjectures. We start by pointing out that in the literature several constructions on quasiregular mappings \cite{BonkHeinonen-Smooth, GehringVaisala1973,KaufmanTysonWu2005}, on mappings of bounded length distortion \cite{MartioVaisala}, and on mappings of finite distortion \cite{GuoHenclTengvall2020} have the conjugated form
\begin{align}\label{eq:ConjugatedFrom}
h \circ w_m \circ g \, ,
\end{align}
where $w_m$ stands for the standard $m$-to-1 winding map, and both $h$ and $g$ are some suitable homeomorphisms. In addition to this, by the results of Church and Hemmingsen \cite[Theorem~4.1]{ChurchHemmingsen1960}, Martio, Rickman, and Väisälä \cite[Lemma~3.20]{MRV-71}, and Luisto and Prywes \cite[Theorem~1.1]{LuistoPrywes2021} quasiregular maps with reasonably regular branch sets or branch set images are indeed topologically equivalent to the standard winding map. Therefore, studying this type of maps is not useful only for the sake of the rigidity and Martio's conjectures, but also for its own right. On the other hand, from the point of view of these conjectures the following question stands out:

\begin{question}\label{question1}
Does the standard $m$-to-1 winding map minimize the inner dilatation for the non-planar quasiregular maps of the conjugated form \eqref{eq:ConjugatedFrom} in the following sense
\begin{align*}
K_I(w_m) = \min \biggl{\{} K_I(h \circ w_m \circ g) : \text{$g$ and $h$ quasiconformal} \biggr{\}} \, ?
\end{align*}
\end{question}

By answering the question above one would solve Martio's conjecture for the conjugated quasiregular mappings. As one may notice, Theorem~\ref{thm:main} already anwers the question in the class of BLD-maps. Therefore, for BLD-maps a natural next step is to study the uniqueness of the winding map by investigating the accuracy of the rigidity conjecture.

We also point out that by the constructions in \cite{BonkHeinonen-Smooth,KaufmanTysonWu2005} there exists even reasonably smooth quasiregular maps with branching. Furthermore, these examples can be even written as \eqref{eq:ConjugatedFrom} for some quasiconformal maps $h$ and $g$. Therefore, by current knowledge studying Martio's conjecture in the class of continuously differentiable quasiregular maps makes perfect sense as well as investigating the following question:

\begin{question}
Let us consider a continuously differentiable non-planar quasiregular map
$$f : \R^n \to \R^n, \quad f = h \circ w_m \circ g \, ,$$
where $g$ and $h$ are quasiconformal maps. Does it follow that $K_I(f) \ge m$?
\end{question}

\newcommand{\etalchar}[1]{$^{#1}$}
\def\cprime{$'$}

\end{document}